\newtheorem{theorem}{Theorem}[section]
\theoremstyle{plain}
\newtheorem{corollary}[theorem]{Corollary}
\newtheorem{definition}[theorem]{Definition}
\newtheorem{lemma}[theorem]{Lemma}
\newtheorem{prop}[theorem]{Proposition}
\newtheorem{remark}[theorem]{Remark}
\def\Rn{\mathbb{R}^n}
\def\R{\mathbb{R}}
\def\Tg{T_\gamma}
\def\Tgm{T^{mod}_\gamma}
\def\Lg{\Lambda_\gamma}
\def\W{W^{1,2}_0(K,\gamma)}
\def\dK{\partial K}
\def\dg{d\gamma}
\def\Kt{K_{\tb}}
\def\Gt{\partial\Kt}
\def\tb{t}
\def\tt{u}
\def\tmax{\tt_{\max}}
\def\uv{t}
\def\nn{\nonumber}
\title[Kohler-Jobin meets Ehrhard]{Kohler-Jobin meets Ehrhard: the sharp lower bound for the Gaussian principal frequency while the Gaussian torsional rigidity is fixed, via rearrangements}
\author[Orli Herscovici, Galyna V. Livshyts]{Orli Herscovici, Galyna V. Livshyts}
\address{School of Mathematics, Georgia Institute of Technology,  Atlanta, GA}
\email{oherscovici3@gatech.edu}
\address{School of Mathematics, Georgia Institute of Technology, Atlanta, GA} \email{glivshyts6@math.gatech.edu}
\begin{document}

\begin{abstract}
In this note, we provide an adaptation of the Kohler-Jobin rearrangement technique to the setting of the Gauss space. As a result, we prove the Gaussian analogue of the Kohler-Jobin resolution of a conjecture of P\'{o}lya-Szeg\"o: when the Gaussian torsional rigidity of a domain is fixed, the Gaussian principal frequency is minimized for the half-space. At the core of this rearrangement technique is the idea of considering a ``modified'' 
torsional rigidity, with respect to a given function, and rearranging its layers to half-spaces, in a particular way; the Rayleigh quotient decreases with this procedure.

We emphasize that the analogy of the Gaussian case with the Lebesgue case is not to be expected here, as in addition to some soft symmetrization ideas, the argument relies on the properties of some special functions; the fact that this analogy does hold is somewhat of a miracle.
\end{abstract}

\maketitle

\section{Introduction}

Consider a log-concave measure $\mu$ on $\R^n$ with density $e^{-V}$, for some differentiable convex function $V:\R^n\rightarrow\R$, and its associated Laplacian
$$L\cdot=\Delta \cdot - \langle \nabla \cdot, \nabla V\rangle,$$ 
where $\Delta =\sum_{j=1}^n\partial^2_{x_j}$. 
An open bounded set $K\subset \R^n$ with Lipschitz continuous boundary is called a domain. Let $W^{1,2}(K,\mu)$ stand for the weighted Sobolev space -- the space of $L^2$ functions with the first order weak derivative in $L^2$,  and $\nabla$ for the weak gradient. Letting $W_0^{1,2}(K,\mu)= W^{1,2}(K,\mu)\cap \{u:u|_{\partial K}=0\}$ to be the closure of $C^\infty_0(K)$ with respect to the norm $||\nabla u||_{L^2(K)}$ (see also  \cite{Brasco2014}), where the boundary value is understood in the sense of the trace (see \cite{Evans-pde}), define the $\mu$-torsional rigidity as 
$$T_{\mu}(K)=\sup_{u\in W_0^{1,2}(K,\mu)} \frac{(\int_K u \,d\mu)^2}{\int_K |\nabla u|^2 d\mu}.$$
See section 2 for the details, or, for example, P\'{o}lya and Szeg\"o \cite{PS1951}. 

The $\mu$-principal frequency of a domain $K$ is defined to be 
$$\Lambda_{\mu}(K):= \inf_{u\in W_0^{1,2}(K,\mu)} \frac {\int_K |\nabla u|^2 d\mu}{\int_K u^2 d\mu}.$$
Note that the torsional rigidity is monotone increasing while the principle frequency is monotone decreasing, i.e. whenever $K\subset M,$ we have $T_{\mu}(K)\leq T_{\mu}(M)$ and $\Lambda_{\mu}(K)\geq \Lambda_{\mu}(M)$. 

In the case when $\mu$ is the Lebesgue measure and $L=\Delta,$ these quantities have been studied extensively, and are intimately tied with the subject of isoperimetric inequalities. See, e.g. Kawohl \cite{kaw}, P\'olya and Szeg\"{o} \cite{PS1951}, 
Burchard \cite{Almut}, Lieb and Loss \cite{LLbook}, Kesavan \cite{Kesavan}, or Vazquez \cite{Vasquez}. In particular, the Faber-Krahn inequality \cite{Faber, Krahn1, Krahn2} states that the (Lebesgue) principal frequency of a domain $K$ of a fixed Lebesgue measure is \emph{minimized} when $K$ is an Euclidean ball. The result of Saint-Venant (see e.g. \cite{PS1951}) states that, conversely, the torsional rigidity of a domain $K$ of a fixed Lebesgue measure is \emph{maximized} when $K$ is an Euclidean ball. 

The easiest way to prove these results is via rearrangements. For a set $K$ in $\R^n$, denote by $K^*$ the centered at the origin Euclidean ball of the same Lebesgue measure as $K$. Recall that the Schwartz rearrangement of a non-negative function $u:K\rightarrow \R$ is the function $u^*:K^*\rightarrow \R$ whose level sets $\{u^*\geq t\}$ are all Euclidean balls centered at the origin, and such that $|\{u\geq t\}|=|\{u^*\geq t\}|$ for all $t\in\R$ (where $|\cdot|$ stands for the Lebesgue measure for sets, or the Euclidean norm for vectors.) The P\'olya-Szeg\"o principle \cite{PS1951} (which is a consequence of the isoperimetric inequality) implies that $\int_K |\nabla u|^2 dx \geq \int_{K^*} |\nabla u^*|^2 dx$, while the definition of the symmetrization yields that $\int_K u^2 dx=\int_{K^*} (u^*)^2 dx$, and, for a non-sign-changing function $u,$ $\int_K u \,dx=\int_{K^*} u^* dx$. Therefore, the Faber-Krahn and the Saint-Venant results follow (together with some additional information that the extremal function for the torsional rigidity is non-negative, as can be deduced via the maximal principle). 

In the case of the Gaussian measure $\gamma$ (which is the measure with density $\frac{1}{(\sqrt{2\pi})^n} e^{-\frac{|x|^2}{2}}$, $x\in\R^n$), the analogue of the Schwartz rearrangement was developed by Ehrhard \cite{Ehr, Ehr2}. The Euclidean balls are replaced with the Gaussian isoperimetric regions, which are nested 
half-spaces (see Sudakov and Tsirelson \cite{ST}, Borell \cite{Bor}). In the Gaussian world, $K^*$ is the half-space of the same Gaussian measure as the domain $K$, and $u^*$ is the function whose level sets are half-spaces, and such that $\gamma(\{u\geq t\})=\gamma(\{u^*\geq t\})$ for all $t\in\R$. 
The P\'olya-Szeg\"o principle is replaced with the analogous 
Ehrhard principle, which yields $\int_K |\nabla u|^2 d\gamma \geq \int_{K^*} |\nabla u^*|^2 d\gamma.$ As a result, whenever the Gaussian measure of the domain $K$ is fixed, the Gaussian principal frequency $\Lambda_{\gamma}(K)$ is minimized when $K$ is a half-space (see Carlen and Kerce \cite{Carlen}), and the Gaussian torsional rigidity $T_{\gamma}(K)$ is maximized when $K$ is a half-space (see e.g. Livshyts \cite{Livshyts2021}).

In the Lebesgue world, P\'olya and Szeg\"o asked another natural question: if for a set $K,$ not its measure, but its torsional rigidity is fixed, then is the principal frequency still minimized on the Euclidean ball? This question was answered in the affirmative by Kohler-Jobin \cite{KJ1982, KJ1978} back in the 1970s, whose rearrangement technique is based on keeping level sets to be of the same torsional rigidity instead of the same Lebesgue measure and relaxing definition of the torsional rigidity. For further generalizations and applications, as well as a nice exposition of the topic, see Brasco \cite{Brasco2014}.

In this paper, we develop the Gaussian analogue of the Kohler-Jobin rearrangement, and show

\begin{theorem}\label{mainThm}
For any open domain $K\subset\Rn$, letting $H$ be the half-space such that $T_{\gamma}(K)=T_{\gamma}(H)$, we have $\Lambda_{\gamma}(K)\geq \Lambda_{\gamma}(H).$
\end{theorem}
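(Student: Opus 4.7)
The plan is to imitate the Kohler-Jobin rearrangement in the Gaussian world, with half-spaces in place of Euclidean balls. Let $u\in\W$ be a positive first eigenfunction of $-L$ on $K$, so $\Lambda_\gamma(K)=\int_K|\nabla u|^2\dg/\int_K u^2\dg$, and for $t\in[0,\max u]$ put $K_t:=\{u>t\}$ and $\tau(t):=T_\gamma(K_t)$. By strict monotonicity of $T_\gamma$ under inclusion, $\tau$ is continuous and strictly decreasing, with $\tau(0)=T_\gamma(K)$ and $\tau(\max u)=0$. For each $t$, let $H_t$ denote the unique half-space with $T_\gamma(H_t)=\tau(t)$; the family $\{H_t\}$ is nested, and I define the Kohler-Jobin rearrangement $u^\star$ on $\Ks:=H_0$ by requiring $\{u^\star>t\}=H_t$. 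By construction $T_\gamma(\Ks)=T_\gamma(K)$, so $\Ks$ is exactly the half-space $H$ appearing in the statement, and it suffices to show $\Lambda_\gamma(\Ks)\le\Lambda_\gamma(K)$, which will follow from the Rayleigh-quotient inequality for the test function $u^\star$, namely $\int_{\Ks}|\nabla u^\star|^2\dg / \int_{\Ks}(u^\star)^2\dg \le \int_K|\nabla u|^2\dg / \int_K u^2\dg$.

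The denominator half of the comparison, $\int_{\Ks}(u^\star)^2\dg\le\int_K u^2\dg$, is immediate from the layer-cake formula together with the Gaussian Saint-Venant inequality (a consequence of the Ehrhard principle recalled in the introduction), which gives $\gamma(H_t)\le\gamma(K_t)$ level by level. The heart of the proof is the matching bound on the Dirichlet energies, sharpened enough that the full ratio decreases. To this end I would introduce the modified torsional rigidity $\Tgm(K,u)$, which encodes the family of level-set torsional rigidities $\{\tau(t)\}_t$ attached to $u$, and derive a Hadamard-type shape-derivative identity expressing $-\tau'(t)$ as a weighted boundary integral on $\partial K_t$ involving $|\nabla u|^{-1}$ and the normal derivative of the torsional function $w_{K_t}$. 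Because $\tau^\star\equiv\tau$ by construction, the analogous identity on the half-space $H_t$ ties $|\nabla u^\star|$ on $\partial H_t$ to the same value $-\tau'(t)$; since both $u^\star$ and $w_{H_t}$ depend only on the coordinate normal to $\partial H_t$, the half-space integrand is constant on $\partial H_t$, and a Cauchy-Schwarz estimate on $\partial K_t$ should produce the required pointwise-in-$t$ domination, which upon integration in $t$ gives $R_\gamma(u^\star)\le R_\gamma(u)$.

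The principal obstacle is precisely this level-by-level comparison. In the Lebesgue setting the analogous step uses only the classical isoperimetric inequality; here it reduces to a subtle analytic inequality about the explicit one-dimensional Gauss torsional profile --- the solution of $-W''+sW'=1$ on a half-line $(a,\infty)$ with $W(a)=0$ --- and its interplay with the one-dimensional principal-frequency eigenfunction of $L$ on the same half-line. Both of these objects are expressible through Mills' ratio and related parabolic-cylinder/error-function special functions, and as the abstract emphasizes, the validity of the needed monotonicity in precisely the form demanded by the rearrangement is the ``miracle'' that makes the Gaussian Kohler-Jobin scheme work. Establishing it rigorously --- most likely by a direct analytic estimate on these special functions --- is where I expect the technical difficulty to concentrate; once it is in hand, the rearrangement construction together with the denominator bound immediately yields $\Lambda_\gamma(H)=\Lambda_\gamma(\Ks)\le R_\gamma(u^\star)\le R_\gamma(u)=\Lambda_\gamma(K)$, completing the proof.
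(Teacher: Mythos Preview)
Your construction is not the Kohler--Jobin rearrangement, and the resulting inequalities point the wrong way. You set $\tau(t)=T_\gamma(K_t)$, the \emph{actual} torsional rigidity of the super-level set, and then choose $H_t$ so that $T_\gamma(H_t)=\tau(t)$ while keeping the same height $t$. With this choice, Gaussian Saint--Venant indeed gives $\gamma(H_t)\le\gamma(K_t)$, and hence $\int(u^\star)^2\,d\gamma\le\int u^2\,d\gamma$; but that makes the \emph{denominator} of the Rayleigh quotient smaller, so the ratio moves the wrong direction unless you can show the Dirichlet energy drops at least as fast --- and you concede this step is unproved. Your proposed route via a Hadamard shape derivative for $t\mapsto T_\gamma(K_t)$ and Cauchy--Schwarz on $\partial K_t$ is not the mechanism that makes Kohler--Jobin work, and there is no reason to expect it yields a pointwise-in-$t$ ratio domination.

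The paper does something structurally different. It parametrizes levels not by $T_\gamma(K_t)$ but by the \emph{modified} torsional rigidity $D(t)=T_\gamma^{mod}(K_t;u-t)=\int_t^{u_{\max}}\gamma(K_s)^2/\ell(s)\,ds$, and it does \emph{not} keep the same height: the rearranged function is $u^\dagger=f\circ T(x_1)$ where $f$ solves an ODE chosen precisely so that $\int_{K^\dagger}|\nabla u^\dagger|^2\,d\gamma=\int_K|\nabla u|^2\,d\gamma$ exactly. The ``miracle'' is the explicit identity $T'(s)=-\sqrt{2\pi}\,e^{s^2/2}\gamma(H_s)^2$ for half-spaces, which makes this equality fall out cleanly. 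One then proves $f(\tau)\ge D^{-1}(\tau)$, so the rearranged function is \emph{taller}, giving $\int(u^\dagger)^2\,d\gamma\ge\int u^2\,d\gamma$; hence the Rayleigh quotient decreases. Finally, $K^\dagger$ is \emph{not} the half-space $H$ with $T_\gamma(H)=T_\gamma(K)$: rather $T_\gamma(K^\dagger)=T_\gamma^{mod}(K;u)\le T_\gamma(K)$, so $K^\dagger\subset H$, and one concludes by monotonicity of $\Lambda_\gamma$. Your attempt to land directly on $H$ forces a harder (and unverified) comparison; the two-step route via $K^\dagger$ is what makes the argument close.
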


We would like to emphasize that the analogy of our work and the works of Kohler-Jobin \cite{KJ1982, KJ1978} and Brasco \cite{Brasco2014} is not to be expected, like in the case of the inequalities of Faber-Krahn and Saint-Venant! Our proof relies heavily on the particular properties of certain special functions, and not just on soft properties of rearrangements (see for example Lemma 2.7 and Theorem 4.1). 

This paper is organized as follows. In Section 2 we discuss some preliminaries. In Section 3 we discuss the Gaussian modified torsional rigidity. In Section 4 we define the Gaussian version of the Kohler-Jobin rearrangement, study its properties, and prove Theorem \ref{mainThm}.

\textbf{Acknowledgement.} 
The authors would like to thank the anonymous referees for their useful comments and suggestions that improved the presentation and quality of this paper.

The authors are supported by the NSF CAREER DMS-1753260. 

\section{Preliminaries}

\subsection{General}

Fix a domain  
$K$ in $\R^n$. 
For a point $x\in\partial K,$ we denote by $n_x$ the unit outer normal vector to the boundary of $K$ at $x$; if $K$ is convex then $n_x$ is defined almost everywhere on the boundary of $K$. 

Recall the co-area formula (see e.g. Brothers and Zeimer \cite{BZ1988}, Lemma 4.1 from Carlen and Kerce \cite{Carlen}, or \cite{coarea}) which states, with the convention of Brothers and Ziemer:
\begin{equation}
\int_{\Rn} |\nabla u|f\, dx=\int_{0}^{\infty}\int_{\{u=s\}}f\,d\mathcal{H}^{n-1} ds,\nn
\end{equation}
where $f\in L_{\infty}(\Rn,\gamma)$, $u\in W^{1,1}(\Rn,\gamma)$ is a nonnegative Borel measurable function, and $\mathcal{H}^{n-1}$ denotes the $n-1$ dimensional Hausdorff measure. Here $W^{1,1}(\Rn,\gamma)$ is a weighted Sobolev space of functions with integrable distributional weak gradient. By replacing $f(x)$ with $\frac{1}{(\sqrt{2\pi})^n}\,f(x)e^{-\frac{|x|^2}{2}}$, we get 
\begin{equation}\label{coarea}
\int_{\Rn} |\nabla u|f\, d\gamma=\int_{0}^{\infty}\int_{\{u=s\}}f\,d\gamma_{\partial\{u=s\}} ds,
\end{equation}
where by $d\gamma_{\partial M}$, for a surface $M,$ we denote the measure $\frac{1}{(\sqrt{2\pi})^n}\,e^{-\frac{|x|^2}{2}}\,d \mathcal{H}^{n-1}$, i.e. the boundary measure with the Gaussian weight.

Recall the Ornstein-Uhlenbeck operator \cite{bogachev} defined as 
\begin{equation}\label{OUO}
Lu=\Delta u-\langle x,\nabla u\rangle,\end{equation}
which verifies the integration by parts identity for the standard Gaussian measure:
$$\int_{K} vLu\, d\gamma=-\int_K \langle \nabla u,\nabla v\rangle d\gamma+\int_{\partial K} v\langle \nabla u,n_x\rangle
d\gamma_{\partial K},$$
 for Lipschitz continuous functions $u$, $v$, and a domain $K$. In fact, the ``integration by parts'' identity also serves as the definition of $L$ on Sobolev spaces. Recall also the following classical existence result, see e.g. \cite{Livshyts2021}.

\begin{theorem}[Dirichlet boundary condition]\label{exist-dir}
Let $K$ be a Lipschitz domain. Let $F\in L^2(K,\gamma)$. Then there exists a unique function $u\in W^{1,2}(K,\gamma)$ which is a solution of 
$$
\begin{cases}  Lu=F &\, on \,\,\,K,\\ u=0 &\, on \,\,\,\partial K.\end{cases}$$

Moreover, if $\partial K$ is $C^{2}$ then $u\in C^{2}(int(K))\cap C^1(\bar{K}).$

Furthermore, if $F\leq 0$ then $u\geq 0,$ and the level sets of $u$ are nested.
\end{theorem}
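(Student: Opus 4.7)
The plan is to establish the four assertions of Theorem \ref{exist-dir} in turn: existence and uniqueness of a weak solution in $\W$, the regularity upgrade under the $C^{2}$ boundary hypothesis, sign preservation when $F\leq 0$, and the nestedness of super-level sets.

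For existence and uniqueness, I would apply the Lax-Milgram theorem on the Hilbert space $\W$ to the symmetric bilinear form $a(u,v)=\int_K\langle \nabla u,\nabla v\rangle\,\dg$ against the continuous linear functional $\ell(v)=-\int_K F v\,\dg$. Continuity of both is Cauchy-Schwarz, while coercivity reduces to a weighted Poincaré inequality $\|u\|_{L^{2}(K,\gamma)}^{2}\leq C\|\nabla u\|_{L^{2}(K,\gamma)}^{2}$ on $\W$; this is standard for Ornstein-Uhlenbeck with Dirichlet boundary (and when $K$ is bounded it transfers trivially from the Euclidean case since the Gaussian density is bounded above and below on $\bar K$). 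The integration-by-parts identity recorded just after (\ref{OUO}) then identifies the weak formulation $a(u,v)=\ell(v)$ with $Lu=F$ distributionally on $K$, while $u=0$ on $\dK$ holds in the trace sense by virtue of $u\in\W$.

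For the regularity statement, note that $L$ is a uniformly elliptic second-order operator with smooth coefficients (the drift $-x$ is analytic), so interior elliptic regularity (local $W^{2,p}$ estimates followed by Schauder bootstrap, once $F$ is, say, absorbed into a slightly stronger class or smoothed) upgrades $u$ to $C^{2}$ on compact subsets of ${\rm int}(K)$. Under the $C^{2}$ boundary hypothesis, the $C^{1}(\bar K)$ conclusion follows from standard boundary elliptic regularity via barrier constructions and global Schauder estimates; the smooth positive Gaussian weight introduces no complications. For sign preservation I would use the weak maximum principle, testing the equation against $v=u^{-}\in\W$ and using $\langle \nabla u,\nabla u^{-}\rangle=-|\nabla u^{-}|^{2}$ to obtain
$$\int_K |\nabla u^{-}|^{2}\,\dg = \int_K F u^{-}\,\dg \leq 0,$$
so $\nabla u^{-}\equiv 0$ and Poincaré forces $u^{-}\equiv 0$, i.e.\ $u\geq 0$.

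Finally, the trivial inclusion $\{u\geq s\}\supset\{u\geq t\}$ for $s\leq t$ holds for any function; the substantive content of nestedness must be the connectedness of each super-level set, and this is where I expect the main obstacle. My plan would be to invoke the strong maximum principle (or Hopf's lemma) for the supersolution $u-t$: if a super-level set $\{u\geq t\}$ decomposed into two disjoint nonempty relatively open pieces, then restricting $u$ to a connected component of $K\setminus\{u\geq t\}$ that touches only one of them and applying the strong maximum principle together with the sign $Lu=F\leq 0$ would yield a contradiction with the boundary behavior of $u$ on $\dK$. Making this rigorous requires careful use of the connectedness of $K$ and the fact that $u$ attains its zero boundary values continuously up to $\dK$, which is why the $C^{1}(\bar K)$ regularity obtained in the previous step is essential here.
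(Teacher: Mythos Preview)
The paper does not actually prove Theorem~\ref{exist-dir}; it is stated as a classical existence result with the citation ``see e.g.\ \cite{Livshyts2021},'' and the only proof-related comment is the single sentence ``Note that the `furthermore' part of the theorem follows from the maximal principle.'' Your proposal is therefore not so much a different route as a fleshing-out of what the paper deliberately omits.

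Your sketch is sound and standard: Lax--Milgram for existence and uniqueness, elliptic regularity for the $C^2$/$C^1(\bar K)$ upgrade, and the weak maximum principle (via testing against $u^-$) for sign preservation are all the expected arguments, and the last two are exactly what the paper's one-line remark gestures at. One small caution: the Poincar\'e inequality you invoke for coercivity needs to hold on possibly unbounded $K$ (half-spaces are the main example in this paper), so the ``transfers trivially from the Euclidean case'' parenthetical covers only part of the story; for unbounded $K$ one should instead appeal to the Gaussian Poincar\'e inequality on $\W$, which is what \cite{Livshyts2021} does. Your reading of ``nested'' as meaning connectedness of super-level sets, and your plan to get it from the strong maximum principle, is reasonable---though the paper itself never makes the intended meaning of ``nested'' precise and simply absorbs it into the maximum principle remark.
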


Note that the ``furthermore'' part of the theorem follows from the maximal principle.

\subsection{Equivalent definitions of the Gaussian torsional rigidity}

\begin{definition} For a  
domain $K\subset\Rn$, define the Gaussian 
torsional rigidity by
\begin{equation}
\Tg(K)=\sup_{v\in\W
}\frac{(\int_K v\,\dg)^2}{\int_K|\nabla v|^2 \dg}.\label{TDef1}
\end{equation}
\end{definition}
This object (with a different normalization!) was considered in \cite{Livshyts2021}. Let us also state

\begin{definition} For a  
domain $K\subset\Rn$, define the Gaussian 
torsion function of $K$ to be the unique function $v\in W^{1,2}(K)$ satisfying the differential equation
$$
\begin{cases}  Lv=-1 &\, on \,\,\,K,\\ v=0 &\, on \,\,\,\partial K.\end{cases}$$
\end{definition}

We note that by the maximal principle, the Gaussian torsion function is necessarily non-negative.

The Gaussian torsional rigidity admits several equivalent definitions.

\begin{prop} \label{prop1-3} 
The following are equivalent:
\begin{enumerate}
\item $\Tg(K)=\inf_{u\in W^{1,2}(K,\gamma): \,\,Lu=-1} \int_K |\nabla u|^2 d\gamma;$
\item $\Tg(K)$ is the Gaussian torsional rigidity, i.e. $\Tg(K)=\sup_{\substack{v\in\W}}\frac{(\int_K v\,\dg)^2}{\int_K|\nabla v|^2 \dg};$
\item $\Tg(K)= \sup_{\substack{v\in\W}} \left({-}\int_K |\nabla v|^2\dg\label{TDef2}{+2}\int_K v\,\dg\right);$
\item $T_{\gamma}(K)=\int_K v d\gamma=\int_K |\nabla v|^2 d\gamma= {-}\int_K |\nabla v|^2\dg\label{TDef2}{+2}\int_K v\,\dg,$
where $v$ is the Gaussian torsion function of $K.$
\end{enumerate}
\end{prop}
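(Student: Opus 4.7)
The strategy is to let $v$ denote the Gaussian torsion function of $K$ (whose existence and uniqueness are guaranteed by Theorem \ref{exist-dir}) and show that each of (1), (2), and (3) equals the common value $\int_K v\,\dg$. From this, (4) will then follow as a bookkeeping step using one preliminary identity that I would establish at the outset.

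\textbf{Preliminary identity.} Applying the Ornstein--Uhlenbeck integration by parts formula to the pair $(v,v)$, and using $Lv=-1$ together with $v|_{\dK}=0$ to kill the boundary term, one obtains
$$-\int_K v\,\dg=\int_K v\,Lv\,\dg=-\int_K |\nabla v|^2\dg,$$
so $\int_K v\,\dg=\int_K |\nabla v|^2\dg$. This immediately yields the last chain of equalities in (4) since $-\int_K|\nabla v|^2\dg+2\int_K v\,\dg=-\int_K v\,\dg+2\int_K v\,\dg=\int_K v\,\dg$.

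\textbf{Identifying (2) and (3) with $\int_K v\,\dg$.} For any test $w\in\W$, I would first rewrite $\int_K w\,\dg=-\int_K w\,Lv\,\dg=\int_K\langle\nabla w,\nabla v\rangle\dg$ using integration by parts (the boundary term vanishes because $w|_{\dK}=0$). For (3), this rewriting gives
$$-\int_K |\nabla w|^2\dg+2\int_K w\,\dg=-\int_K|\nabla w-\nabla v|^2\dg+\int_K|\nabla v|^2\dg\leq \int_K v\,\dg,$$
with equality precisely when $w=v$ (both lie in $\W$ so equal gradients force equal functions). For (2), the same rewriting combined with Cauchy--Schwarz gives
$$\Bigl(\int_K w\,\dg\Bigr)^2=\Bigl(\int_K\langle\nabla w,\nabla v\rangle\dg\Bigr)^2\leq \int_K|\nabla w|^2\dg\cdot\int_K|\nabla v|^2\dg,$$
so the Rayleigh-type quotient in (2) is bounded above by $\int_K|\nabla v|^2\dg=\int_K v\,\dg$, with equality at $w=v$ (since $F$ is scale-invariant, the full family $\{cv\}_{c\neq 0}$ saturates).

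\textbf{Identifying (1) with $\int_K v\,\dg$.} Given any $u\in W^{1,2}(K,\gamma)$ with $Lu=-1$, set $w:=u-v$, so $Lw=0$. The cross term expands as
$$\int_K |\nabla u|^2\dg=\int_K|\nabla v|^2\dg+2\int_K\langle\nabla v,\nabla w\rangle\dg+\int_K|\nabla w|^2\dg.$$
Integration by parts on the cross term gives $\int_K\langle\nabla v,\nabla w\rangle\dg=-\int_K v\,Lw\,\dg+\int_{\dK}v\,\langle\nabla w,n_x\rangle\,d\gamma_{\dK}=0$ since $Lw=0$ and $v|_{\dK}=0$; here I would invoke the standard trace/regularity facts for Lipschitz domains and the fact that $v\in C^1(\bar K)$ when $\dK$ is smooth. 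Thus $\int_K|\nabla u|^2\dg\geq \int_K|\nabla v|^2\dg$, with equality when $\nabla w\equiv 0$. The infimum in (1) is therefore $\int_K|\nabla v|^2\dg=\int_K v\,\dg$, closing the loop.

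The only mild subtlety is the boundary-term vanishing in the cross-term computation for (1); because one does not assume $u$ vanishes on $\dK$, the integration by parts must be organized so that the derivatives fall on $w$ rather than on $v$, as above. The rest is clean variational calculus on top of the preliminary identity $\int_K v\,\dg=\int_K|\nabla v|^2\dg$.
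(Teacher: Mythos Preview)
Your proof is correct and uses the same core ingredients as the paper: integration by parts to rewrite $\int_K w\,\dg$ as $\int_K\langle\nabla w,\nabla v\rangle\,\dg$, Cauchy--Schwarz for (2), completing the square for (3), and the torsion function as the common extremizer. The only organizational difference is that for (1) the paper compares an arbitrary $u$ with $Lu=-1$ directly against an arbitrary $v\in\W$ via Cauchy--Schwarz (giving $\inf\geq\sup$ in one stroke), whereas you use the orthogonal decomposition $u=v+w$ with $Lw=0$; both are equivalent standard arguments.
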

\begin{proof} For any $v\in \W$  and any $u\in W^{1,2}(K,\gamma)$ with $Lu=-1$, we have, by Cauchy's inequality,

$$\int_K |\nabla u|^2 d\gamma\geq \frac{\left(\int_K \langle \nabla u,\nabla v\rangle d\gamma\right)^2}{\int_K |\nabla v|^2 d\gamma}=\frac{\left(\int_K v\,\dg\right)^2}{\int_K |\nabla v |^2 d\gamma},$$
where for the last identity we used the definition of the operator $L$ on Sobolev spaces (see, for example, the book of Evans and Gariepy \cite{Evans-Gariepy}): we say that $Lu=F$ for a Sobolev function $u$ if for any smooth test-function $v$ with $v=0$ on $\partial K$ we have $\int_K vF d\gamma=-\int_K \langle \nabla v,\nabla v\rangle d\gamma$, and the more general fact (as used above) for Sobolev functions $v$ follows by a standard approximation argument. Therefore,
$$ 
\inf_{\substack{u\in W^{1,2}(K,\gamma):\\ Lu=-1}} \int_K |\nabla u|^2 d\gamma
\geq
\sup_{\substack{v\in\W}}\frac{(\int_K v\,\dg)^2}{\int_K|\nabla v|^2 \dg}.
$$

In order to see that the equality is attained, consider (on both sides) the function $v\in\W$ satisfying the differential 
equation
$$
\begin{cases}  Lv=-1 &\, on \,\,\,K,\\ v=0 &\, on \,\,\,\partial K.\end{cases}$$
Thus (1) and (2) are equivalent. 

In order to see that (1) and (3) are equivalent, use the  inequality $$\int_K |\nabla u|^2 d\gamma\geq \int_K \left(2\langle \nabla u,\nabla v\rangle -|\nabla v|^2 \right)d\gamma,$$ and the fact that for any $v\in \W$ and any $u\in W^{1,2}(K,\gamma)$ with $Lu=-1$, we have that $\int_K \langle \nabla u,\nabla v\rangle\,d\gamma=\int_K v\,d\gamma$,
and argue in the same manner as before.

The equivalence of (4) to the rest of the definitions follows automatically by the choice of the extremizing function. Note that with this choice of $v,$ we have, integrating by parts, $\int_K v \,d\gamma=\int_K |\nabla v|^2 d\gamma.$
\end{proof}

We shall record the following classical fact:

\begin{lemma}[torsional rigidity is monotone]\label{T-mon}
For a pair of domains $K$ and $M$, $T_{\gamma}(K)\geq T_{\gamma}(M)$ whenever $M\subset K.$
\end{lemma}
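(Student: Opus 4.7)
The natural approach is to exploit the variational characterization of $\Tg$ given by part (2) of Proposition \ref{prop1-3}, namely
$$\Tg(K)=\sup_{v\in W^{1,2}_0(K,\gamma)}\frac{\left(\int_K v\,\dg\right)^2}{\int_K |\nabla v|^2\,\dg}.$$
Since this is a supremum, monotonicity will follow from the observation that any admissible competitor on the smaller set $L$ gives rise, by extension by zero, to an admissible competitor on the larger set $K$ with an identical value of the Rayleigh-type quotient.

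Concretely, I would argue as follows. Assume $L\subset K$, both convex (so in particular $L$ is a Lipschitz domain). Given any $v\in W^{1,2}_0(L,\gamma)$, define $\tilde v\colon K\to\R$ by $\tilde v=v$ on $L$ and $\tilde v=0$ on $K\setminus L$. Since $v$ vanishes on $\partial L$ in the trace sense and $L$ has a Lipschitz boundary, the extension $\tilde v$ belongs to $W^{1,2}_0(K,\gamma)$, and its weak gradient is $\nabla v$ on $L$ and $0$ on $K\setminus L$. Consequently,
$$\int_K \tilde v\,\dg=\int_L v\,\dg \qquad\text{and}\qquad \int_K |\nabla \tilde v|^2\,\dg=\int_L |\nabla v|^2\,\dg,$$
so the quotients agree. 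Taking the supremum over $v\in W^{1,2}_0(L,\gamma)$ yields
$$\Tg(L)=\sup_{v\in W^{1,2}_0(L,\gamma)}\frac{\left(\int_L v\,\dg\right)^2}{\int_L |\nabla v|^2\,\dg}\leq \sup_{w\in W^{1,2}_0(K,\gamma)}\frac{\left(\int_K w\,\dg\right)^2}{\int_K |\nabla w|^2\,\dg}=\Tg(K).$$

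The only mildly delicate step is the claim that zero-extension across $\partial L$ lands in $W^{1,2}_0(K,\gamma)$; this is standard since $L$ is convex and therefore has a Lipschitz boundary, and the Gaussian weight is smooth and bounded above and below on any compact set, so the density of compactly supported smooth functions and the usual approximation arguments transfer verbatim from the unweighted to the Gaussian setting. No further special properties of $\gamma$ or the Ornstein--Uhlenbeck operator are used; monotonicity here is a purely soft, variational fact, analogous to the Lebesgue case, and is considerably easier than the main theorem of the paper.
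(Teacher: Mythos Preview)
Your proof is correct and follows essentially the same approach as the paper: extend a competitor on $L$ by zero to obtain a competitor on $K$, and compare the Rayleigh-type quotients via the variational definition of $\Tg$. The only cosmetic difference is that the paper extends the single optimal function (the torsion function of $L$) rather than an arbitrary $v\in W^{1,2}_0(L,\gamma)$, but the underlying idea is identical.
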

\begin{proof} Let $v$ be the torsion function on $M$ (i.e. the function $v$ maximizing the righthand side of the \eqref{TDef1}), and extend it by zero to the whole of $K$; let us call the resulting function $\tilde{v}.$ Note that the result is still a Sobolev function since $v|_{\partial M}=0.$ We have
$$\Tg(M)=\frac{(\int_M v\dg)^2}{\int_M|\nabla v|^2 \dg}=\frac{(\int_K \tilde{v}\dg)^2}{\int_K|\nabla \tilde{v}|^2 \dg}\leq \Tg(K),$$
where in the last line, the definition of the torsional rigidity was used again.
\end{proof}

\medskip
\medskip

Let us also recall the Gaussian analogue of the Saint-Venant theorem (see e.g. Proposition 5.6 in Livshyts \cite{Livshyts2021}):

\begin{lemma}\label{SV}
Of all domains $K$ with Gaussian measure $a\in [0,1]$, the torsional rigidity is maximized for the half-space $\{x_1\leq \Phi^{-1}(a)\}.$
\end{lemma}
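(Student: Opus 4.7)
The plan is to prove the statement by Ehrhard symmetrization applied directly to the variational definition of $\Tg(K)$. By part (2) of Proposition \ref{prop1-3},
$$
\Tg(K)=\sup_{v\in\W}\frac{\left(\int_K v\,\dg\right)^2}{\int_K|\nabla v|^2\,\dg},
$$
and since the quotient cannot decrease when $v$ is replaced by $|v|$ (the gradient term is unchanged, while the numerator can only grow), the supremum may be taken over non-negative $v$.

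Given such a non-negative $v\in\W$, extend it by zero to all of $\Rn$ and let $v^\star$ denote its Ehrhard rearrangement: the function on the half-space $K^\star=\{x_1\leq\Phi^{-1}(a)\}$ (with $\gamma(K^\star)=\gamma(K)=a$) whose superlevel sets $\{v^\star\geq t\}$ are half-spaces of the form $\{x_1\leq c_t\}$ with $\gamma(\{v^\star\geq t\})=\gamma(\{v\geq t\})$ for every $t>0$. Since $v$ vanishes on $\partial K$ (so the zero-extension lies in $W^{1,2}(\Rn,\gamma)$) and $\gamma(\{v>0\})\leq\gamma(K)=\gamma(K^\star)$, the support of $v^\star$ is contained in $K^\star$ and $v^\star\in\Ws$.

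The conclusion now follows from two standard properties of Ehrhard symmetrization. By equimeasurability (layer-cake),
$$
\int_{K^\star}v^\star\,\dg=\int_0^\infty\gamma(\{v^\star\geq t\})\,dt=\int_0^\infty\gamma(\{v\geq t\})\,dt=\int_K v\,\dg,
$$
while the Ehrhard principle (the Gaussian P\'olya--Szeg\H{o} principle, as mentioned in the introduction) gives
$$
\int_K|\nabla v|^2\,\dg\geq\int_{K^\star}|\nabla v^\star|^2\,\dg.
$$
Combining these two facts,
$$
\frac{\left(\int_K v\,\dg\right)^2}{\int_K|\nabla v|^2\,\dg}\leq\frac{\left(\int_{K^\star}v^\star\,\dg\right)^2}{\int_{K^\star}|\nabla v^\star|^2\,\dg}\leq\Tg(K^\star),
$$
and taking the supremum over $v\in\W$ yields $\Tg(K)\leq\Tg(K^\star)$, as desired.

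There is no serious obstacle here: the argument is a direct transcription of the classical Saint-Venant proof, with the Schwartz rearrangement replaced by the Ehrhard rearrangement and the Euclidean P\'olya--Szeg\H{o} principle replaced by its Gaussian counterpart. The only minor point requiring attention is the boundary behavior of $v^\star$ on $\partial K^\star$, which is automatic once one extends $v$ by zero across $\dK$ and uses that $\{v^\star>0\}\subset K^\star$ up to a Gaussian-null set.
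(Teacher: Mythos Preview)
Your argument is correct and follows exactly the route the paper outlines in its introduction: replace the Schwartz rearrangement by the Ehrhard rearrangement, invoke equimeasurability for the numerator and the Ehrhard (Gaussian P\'olya--Szeg\H{o}) principle for the denominator, and conclude. The paper itself does not supply a proof of this lemma at all; it simply quotes it as Proposition~5.6 of \cite{Livshyts2021}, so your write-up is in fact more detailed than what appears here, and is consistent with the sketch given in the introduction.
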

Here the function $\Phi^{-1}(t)$ is the inverse function of the standard Gaussian distribution $\Phi(t)=\int\limits_{-\infty}^{t}\frac{1}{\sqrt{2\pi}}\exp(-\frac{s^2}{2})\,ds$.
\begin{remark} The proposition 5.6 in Livshyts \cite{Livshyts2021} was proved for convex set $K$, but the same arguments can be applied also for all the measurable sets. We do not bring this proof here.
\end{remark}

\subsection{Gaussian torsional rigidity of a half-space}

Let us denote the right half-space
$$H_s=\{x=(x_1,\ldots,x_n)\in\R^n\,\big|\,x_1\geq s\}.$$
Define the function $T:\R\rightarrow \R$ to be the Gaussian torsional rigidity of the half-space:
$$T(s)=T_{\gamma}(H_{s}).$$
We shall show
\begin{lemma}\label{torhs} $T'(s)=-\sqrt{2\pi} \,e^{\frac{s^2}{2}}\gamma(H_s)^2.$
\end{lemma}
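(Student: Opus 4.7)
The plan is to reduce the problem to a one-dimensional ODE by exploiting the translation and rotational symmetry of the half-space, solve for the torsion function explicitly, and then differentiate the resulting integral formula for $T(s)$.

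First, let $v_s$ denote the Gaussian torsion function on $H_s$, i.e.\ the unique solution of $Lv_s = -1$ on $H_s$ with $v_s=0$ on $\{x_1=s\}$, provided by Theorem \ref{exist-dir}. Since the Ornstein-Uhlenbeck operator, the boundary, and the Gaussian measure are all invariant under rotations fixing the $x_1$-axis, the function $(x_2,\dots,x_n)\mapsto v_s(x_1,x_2,\dots,x_n)$ must be constant by uniqueness; hence $v_s(x)=w_s(x_1)$ for some $w_s\colon[s,\infty)\to\R$. The equation \eqref{OUO} then collapses to the one-dimensional ODE
\begin{equation*}
w_s''(t)-t\,w_s'(t)=-1,\qquad w_s(s)=0.
\end{equation*}

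Next, multiplying by the integrating factor $e^{-t^2/2}$ turns this into $(e^{-t^2/2}w_s'(t))'=-e^{-t^2/2}$. Integrating from $t$ to $\infty$ and using that $w_s'(t)\,e^{-t^2/2}\to 0$ as $t\to\infty$ (the growth of $w_s$ must be controlled so that $v_s\in W^{1,2}(H_s,\gamma)$), we obtain
\begin{equation*}
w_s'(t)=e^{t^2/2}\int_t^\infty e^{-u^2/2}\,du=\sqrt{2\pi}\,e^{t^2/2}\gamma(H_t).
\end{equation*}
The point to emphasize is that $w_s'$ is \emph{independent of $s$}; the only $s$-dependence of $w_s$ sits in the constant of integration forced by $w_s(s)=0$, so $w_s(t)=\int_s^t w_0'(\tau)\,d\tau$ for a fixed function $w_0'$.

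Finally, by Proposition \ref{prop1-3}(4), and integrating out the variables $x_2,\dots,x_n$ against the standard Gaussian,
\begin{equation*}
T(s)=\int_{H_s} v_s\,d\gamma=\int_s^\infty w_s(t)\,\frac{e^{-t^2/2}}{\sqrt{2\pi}}\,dt.
\end{equation*}
Differentiating under the integral is clean here: the boundary term at $t=s$ vanishes because $w_s(s)=0$, and for $t>s$ one has $\partial_s w_s(t)=-w_0'(s)$ by the formula above. Therefore
\begin{equation*}
T'(s)=-w_0'(s)\int_s^\infty \frac{e^{-t^2/2}}{\sqrt{2\pi}}\,dt=-w_0'(s)\,\gamma(H_s)=-\sqrt{2\pi}\,e^{s^2/2}\gamma(H_s)^2,
\end{equation*}
which is the claim. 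The only non-routine step is the reduction to one dimension through uniqueness in Theorem \ref{exist-dir}, together with the decay condition at infinity needed to kill the boundary term at $t=\infty$ when integrating the ODE for $w_s'$; both are standard but worth spelling out.
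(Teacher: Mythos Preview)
Your proof is correct and follows essentially the same approach as the paper: identify the one-dimensional torsion function $v_s(x)=V(x_1)-V(s)$ with $V'(t)=e^{t^2/2}\int_t^\infty e^{-\tau^2/2}\,d\tau$, express $T(s)$ via Proposition~\ref{prop1-3}(4), and differentiate. The one cosmetic difference is that the paper uses the energy form $T(s)=\int_{H_s}|\nabla v_s|^2\,d\gamma=\tfrac{1}{\sqrt{2\pi}}\int_s^\infty V'(t)^2 e^{-t^2/2}\,dt$, whose integrand is independent of $s$, so differentiation is immediate by the fundamental theorem of calculus; your choice of $T(s)=\int_{H_s} v_s\,d\gamma$ forces you to also differentiate under the integral, which you handle correctly but is an extra step you could have avoided.
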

\begin{proof} Note that the torsion function $v_s: H_{s}\rightarrow\R$ is given by 
$$v_s(x)=V(x_1)-V(s),$$
where $V(t)$ is such a function that
$$V'(t)=e^{\frac{t^2}{2}}\int_{t}^{\infty} e^{-\frac{\tau^2}{2}} d\tau.$$
Indeed, $v_s\in W^{1,2}(H_s)$, and one may check directly that $Lv_s=-1$, and clearly $v_s(x)=0$ when $x\in\partial H_{s}.$ Therefore, by (4) of Proposition \ref{prop1-3}, 
$$T(s)=T_{\gamma}(H_{s})=\int_{H_{s}} |\nabla v_s|^2 d\gamma=\frac{1}{\sqrt{2\pi}}\int_{s}^{\infty} V'(t)^2 \,e^{-\frac{t^2}{2}} dt,$$
where in the last passage we used Fubini's theorem. Thus 
$$T'(s)= -\frac{1}{\sqrt{2\pi}}\, V'(s)^2\, e^{-\frac{s^2}{2}}= -\sqrt{2\pi} \,e^{\frac{s^2}{2}}\left(\frac{1}{\sqrt{2\pi}}\int_{s}^{\infty} e^{-\frac{\tau^2}{2}} d\tau\right)^2,$$
	which finishes the proof.
\end{proof}

\subsection{Equivalent definitions of the Gaussian principal frequency}
\begin{definition} For a  
domain $K\subset\Rn$, define the Gaussian 
principal frequency $\Lg(K)$ by
\begin{equation*}
\Lg(K)=\inf_{\substack{v\in\W}}\frac{\int_K |\nabla v|^2\dg}{\int_K v^2 \dg}.
\label{LamDef1}
\end{equation*}
\end{definition}
We outline the following classical fact (see e.g. P\'olya and Szeg\"o for the Lebesgue version \cite{PS1951}.)
\begin{prop} \label{Prop3-2}
$\Lg(K)$ is the first nonzero eigenvalue of $L$ with the Dirichlet boundary condition on a domain $K$.
\end{prop}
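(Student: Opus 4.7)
The plan is to follow the standard direct-method / Euler-Lagrange / minimality argument, adapted from the Lebesgue case to the Gaussian setting via the Ornstein-Uhlenbeck integration by parts formula already recorded in the preliminaries.

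First, I would establish existence of a minimizer. Take a minimizing sequence $v_k\in\W$ normalized so that $\int_K v_k^2\,\dg=1$ and $\int_K|\nabla v_k|^2\,\dg\to\Lg(K)$. The sequence is bounded in $\W$, and the Gaussian weight provides enough decay for the embedding $\W\hookrightarrow L^2(K,\gamma)$ to be compact (on a convex domain this is the Gaussian analogue of Rellich-Kondrachov; the resolvent of $L$ is compact on $L^2(\R^n,\gamma)$, and restricting to a Dirichlet subproblem preserves this). Passing to a subsequence, $v_k\rightharpoonup u$ weakly in $\W$ and strongly in $L^2(K,\gamma)$, so $\int_K u^2\,\dg=1$. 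Weak lower semicontinuity of $v\mapsto\int_K|\nabla v|^2\,\dg$ then gives $\int_K|\nabla u|^2\,\dg\le\Lg(K)$, which by definition of $\Lg(K)$ must be an equality. Thus the infimum is attained by some $u\in\W$ with $\int_K u^2\,\dg=1$.

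Second, I would derive the Euler-Lagrange equation. For any test function $\varphi\in\W$, expanding the Rayleigh quotient at $u+\varepsilon\varphi$ and using that $u$ is a minimizer yields
\[
\int_K\langle\nabla u,\nabla\varphi\rangle\,\dg=\Lg(K)\int_K u\varphi\,\dg.
\]
Using the Ornstein-Uhlenbeck integration by parts from \eqref{OUO} (with vanishing boundary term since $\varphi\in\W$), this is equivalent to $\int_K\varphi(-Lu-\Lg(K)u)\,\dg=0$ for all $\varphi\in\W$, so $Lu=-\Lg(K)\,u$ weakly in $K$, with $u|_{\dK}=0$. Hence $\Lg(K)$ is an eigenvalue of $-L$ with Dirichlet boundary data, realized by the eigenfunction $u$.

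Finally, I would verify it is the \emph{smallest} such eigenvalue. Suppose $w\in\W\setminus\{0\}$ satisfies $Lw=-\lambda w$ with $w|_{\dK}=0$ for some $\lambda\in\R$. Multiplying by $w$, integrating over $K$ with respect to $\gamma$, and applying the Ornstein-Uhlenbeck integration by parts (the boundary term vanishes) gives
\[
\lambda\int_K w^2\,\dg=-\int_K w\,Lw\,\dg=\int_K|\nabla w|^2\,\dg,
\]
so $\lambda$ equals the Rayleigh quotient of $w$, and therefore $\lambda\ge\Lg(K)$ by the variational definition. Combining with the previous step, $\Lg(K)$ is exactly the first Dirichlet eigenvalue of $L$.

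The one step requiring genuine care — and which I would flag as the main technical obstacle — is the compactness of $\W\hookrightarrow L^2(K,\gamma)$ when $K$ is unbounded (the convex sets considered in Theorem \ref{mainThm} need not be bounded). One way to handle this cleanly is to invoke that $-L$ on $L^2(\R^n,\gamma)$ has purely discrete spectrum with compact resolvent (it is essentially the number operator), and that the Dirichlet form on $K$ inherits this property; alternatively one exhausts $K$ by bounded convex subdomains $K_j\uparrow K$, applies the standard Rellich compactness on each $K_j$, and passes to the limit using monotonicity of $\Lg$ and uniform Gaussian tail control. Every other step is formal manipulation using tools already established in the excerpt.
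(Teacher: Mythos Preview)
Your argument is correct, but it follows a genuinely different route from the paper's.

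The paper does not run the direct method at all. Instead it uses the classical \emph{ground state substitution} (Picone-type) trick: it takes for granted a nonnegative $v\in\W$ solving $Lv=-\Lambda v$ with $v|_{\dK}=0$ for the smallest such $\Lambda$, writes an arbitrary competitor as $f=qv$, and computes directly that
\[
\int_K|\nabla f|^2\,\dg-\Lambda\int_K f^2\,\dg=\int_K|\nabla q|^2 v^2\,\dg\ge 0,
\]
with equality when $q$ is constant; density of functions of the form $qv$ then gives $\Lg(K)=\Lambda$. So the paper's proof is a pure integration-by-parts identity once the eigenfunction is in hand, and it implicitly relies on interior positivity of $v$ (so that $q=f/v$ makes sense) and on an approximation step it only gestures at.

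Your approach, by contrast, is self-contained on the existence side: you \emph{construct} the first eigenfunction as a Rayleigh-quotient minimizer via compactness and lower semicontinuity, then read off the eigenvalue equation as the Euler--Lagrange condition, and finally check minimality among all eigenvalues by the trivial Rayleigh-quotient bound. This is the standard modern variational treatment and avoids the positivity/approximation issues hidden in the factorization $f=qv$. The price you pay is exactly the point you flagged: the compact embedding $\W\hookrightarrow L^2(K,\gamma)$ for possibly unbounded convex $K$. Your justification via the discrete spectrum of $-L$ on $L^2(\Rn,\gamma)$ is the right one (extend by zero and use that $(1-L)^{-1}$ is compact, equivalently that the Hermite eigenvalues tend to infinity); the exhaustion argument you sketch as a backup would also work but is unnecessary once you invoke the full-space compactness.

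In short: the paper presupposes the eigenfunction and verifies the variational characterization by an algebraic identity; you produce the eigenfunction variationally. Both are valid, and yours is arguably cleaner for the purposes of this note since the only nontrivial input (Gaussian Rellich compactness) is a standard fact about the Ornstein--Uhlenbeck semigroup.
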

\begin{proof}
We aim to show that $\Lg(K)$ is the smallest positive number $\Lambda_{\gamma}$ such that there exists a function $u\in W_0^{1,2}(K)$ such that $u \not\equiv0$ with 
\begin{equation}\label{LamDef2}
Lu=-\Lambda_{\gamma} u.
\end{equation}
Let $v\in W_0^{1,2}(K)$ be a nonnegative function satisfying the differential equation 
\eqref{LamDef2} in the interior of the domain $K$ and $v|_{\dK}=0$. 

First of all we assume that $f=qv$ for a function $q\in C^2(\Rn)$ defined in $K\cup\dK$. Then
\begin{align*}
\int_K\left(|\nabla f|^2-\Lg f^2\right)\dg&=\int_K\left(\left<\nabla(qv),\nabla(qv)\right>-\Lg q^2v^2\right)\dg\\
&=\int_K\left(q^2|\nabla v|^2 +2vq\left<\nabla q,\nabla v\right>+v^2|\nabla q|^2 + q^2vLv\right)\dg\\
&=\int_K|\nabla q|^2 v^2\dg + I_0,
\end{align*}
where 
\begin{equation}
I_0=\int_K\left(q^2|\nabla v|^2+2vq\left<\nabla q,\nabla v\right>+q^2vLv\right)\dg\label{F}
\end{equation}
Integrating by parts and following the definition \eqref{OUO}, we get
\begin{align*}
\int_K 2vq\left<\nabla q,\nabla v\right>\dg&=-\frac{1}{2}\int_K \left<\nabla q^2,\nabla v^2\right>\dg\\
&=-\frac{1}{2}\int_K q^2Lv^2\dg\\
&=-\frac{1}{2}\int_K q^2(2vLv+2|\nabla v|^2)\dg.
\end{align*}
Substituting it into \eqref{F}, we obtain
\begin{equation*}
I_0=\int\left(q^2|\nabla v|^2 -q^2vLv-q^2|\nabla v|^2+q^2vLv\right)\dg=0,
\end{equation*}
which implies
\begin{equation*}
\int|\nabla f|^2\dg=\Lg\int f^2\dg+\int|\nabla q|^2v^2\dg,
\end{equation*}
and therefore
\begin{equation*}
\frac{\int|\nabla f|^2\dg}{\int f^2\dg}\geq \Lg.
\end{equation*}
The equality is attained if $q$ is a constant function, and the statement follows by approximation and mollification \cite{Evans-pde}.
\end{proof}

We shall record the following classical fact.

\begin{lemma}[principal frequency is monotone decreasing]\label{L-mon}
For a pair of domains $K$ and $M$, $\Lambda_{\gamma}(K)\geq \Lambda_{\gamma}(M)$ whenever $K\subset M.$
\end{lemma}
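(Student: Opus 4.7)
The plan is to mimic the proof of Lemma \ref{T-mon} almost verbatim, using the variational characterization of $\Lg$. Specifically, I would argue that any admissible test function on the smaller domain $K$ can be promoted to an admissible test function on the larger domain $L$ without changing its Rayleigh quotient, so the infimum on $L$ cannot exceed the infimum on $K$.

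Concretely, fix any $v \in \W$. Because $v$ vanishes on $\partial K$ (in the trace sense) and $K \subset L$, the zero-extension
$$\tilde v(x) = \begin{cases} v(x), & x \in K, \\ 0, & x \in L \setminus K \end{cases}$$
lies in $W^{1,2}_0(L,\gamma)$, with weak gradient $\nabla \tilde v = \nabla v$ on $K$ and $\nabla \tilde v = 0$ on $L \setminus K$. This gives
$$\int_L |\nabla \tilde v|^2 \dg = \int_K |\nabla v|^2 \dg \quad \text{and} \quad \int_L \tilde v^2 \dg = \int_K v^2 \dg,$$
so the Rayleigh quotients agree:
$$\frac{\int_L |\nabla \tilde v|^2 \dg}{\int_L \tilde v^2 \dg} = \frac{\int_K |\nabla v|^2 \dg}{\int_K v^2 \dg}.$$
By the definition of $\Lg(L)$ as an infimum over $W^{1,2}_0(L,\gamma)$, the left-hand side is at least $\Lg(L)$, and taking the infimum over all admissible $v \in \W$ on the right gives $\Lg(L) \leq \Lg(K)$, which is the claim.

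There is no real obstacle here; the only point worth checking is the standard Sobolev fact that the zero-extension of a $W^{1,2}_0(K,\gamma)$ function across $\partial K$ is a $W^{1,2}_0(L,\gamma)$ function. This is routine for Lipschitz (in particular convex) domains via the characterization of $W^{1,2}_0$ as the trace-zero subspace, and the Gaussian weight $e^{-|x|^2/2}$ is smooth and positive so it does not affect the argument. Note also that the assumption of convexity of $K$ and $L$ is not actually used beyond ensuring the Sobolev spaces behave well; as in Lemma \ref{T-mon}, the result is purely variational.
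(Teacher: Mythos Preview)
Your proof is correct and follows exactly the approach the paper indicates: zero-extend a test function from $K$ to $L$, observe the Rayleigh quotient is unchanged, and use that the infimum over the larger class $W^{1,2}_0(L,\gamma)$ is at most the infimum over the smaller class $\W$. This is precisely what the paper means by ``completely analogous to the proof of Lemma~\ref{T-mon}.''
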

The proof of this Lemma is completely analogous to the proof of Lemma \ref{T-mon}, except we consider infimum in place of supremum, and argue that infimum over a larger class is smaller than the infimum over a smaller class. We leave the details to the reader.

\section{Modified torsional rigidity}

Let us use the notation
\begin{equation}\label{thefunctional}
\Tg[v]=-\int_K|\nabla v|^2\dg+2\int_K v\,\dg,\quad v\in\W.
\end{equation}

For a bounded non-negative $\tt(x)\in W_0^{1,2}(K,\gamma)$ which vanishes at the boundary, let $\tmax$ be the maximum value of $u$ on K. Let us define
\begin{align}
\Kt&=\{x\in K\,|\, \tt(x)>\tb\,\},\nn\\
\Gt&=\{x\in K\,|\, \tt(x)=\tb\,\},\nn 
\end{align}
and
\begin{align}
\gamma(\Kt)&=\int_{\Kt}\dg,\nn\\
\ell(\tb)&=\int_{\Gt}|\nabla \tt|\,d\gamma_{\partial K_t}. 
\label{eqell}
\end{align}

We shall consider a class of \emph{reference functions} $\tt(x)\in W_0^{1,2}(K,\gamma)\cap L_{\infty}(K)$ which are non-negative on $K$ and vanish at the boundary of $K,$ and such that $\frac{\gamma(\Kt)^2}{\ell(\tb)}$ is bounded from above by an absolute constant that does not depend on $t.$

\begin{definition} Consider a reference function $u$. Define $Cl(u)$ to be the class of functions $v(x)\in\W\cap L_{\infty}(K)$ 
which can be presented as $ v=\varphi\circ u(x)$, 
for some real-valued function $\varphi(x)\geq 0,$ defined on the interval $[0, \tmax]$ and satisfying  
$\varphi(0)=0$.

The \emph{Gaussian modified torsional rigidity} $\Tgm(K;\tt(x))$ with respect to this reference function $u$ is given by
\begin{equation}\label{modTR}
\Tgm(K;\tt)=\sup_{v\in Cl(u)}\Tg[v].
\end{equation}
\end{definition}

Note that $\Tg[v]=\Tg[\varphi\circ\tt(x)]$, and thus $\Tgm(K;\tt),$ depends only on the collection of the level sets of $\tt(x)$. It was shown in the Proposition~\ref{prop1-3} that 
\begin{equation*}
\Tg(K)=\sup_{v\in\W}\Tg[v],
\end{equation*}
and therefore, since $Cl(u)\subset W_0^{1,2}(K,\gamma),$ we have
\begin{equation}\label{TRgeqTmod}
\Tg(K)\geq \Tgm(K;\tt)
\end{equation}
for any reference function $u$. Furthermore, this inequality is equality when $u$ is the torsion function. It is important to note that the torsion function $\tt(x)\in W_0^{1,2}(K,\gamma)$ is indeed a reference function: it is non-negative, bounded (by maximum principle), vanishes at the boundary, and $\frac{\gamma(\Kt)^2}{\ell(\tb)}\leq 1.$ Indeed, when $Lu=-1,$ we integrate by parts to see that
$$\gamma(K_t)=-\int_{K_t} Lu d\gamma=\int_{\partial K_t} \langle \nabla u,n_x\rangle d\gamma_{\partial K}=\int_{\partial K_t} |\nabla u| d\gamma_{\partial K} =\ell(t),$$
since the outer unit normal to a level set of a function is collinear to the gradient. Therefore, in the case of the torsion function, we have
$$\frac{\gamma(\Kt)^2}{\ell(\tb)}=\gamma(K_t)\leq 1.$$

We shall show the following analogue of the \cite[Lemma~1]{KJ1982}, \cite[Lemma~1]{KJ1978} (see also a wonderful exposition by Brasco \cite{Brasco2014}), in which we are going to find exactly the maximizing function for the modified torsional rigidity, given an arbitrary reference function $\tt(x)\in W_0^{1,2}(K,\gamma)$.

\begin{prop}\label{FR-Lemma1}
Fix a reference function $u$. The Gaussian modified torsional rigidity of $K$ with respect to $u$ is given by
\begin{equation*}
\Tgm(K;\tt(x))=\int_0^{\tmax} 
\frac{\gamma(\Kt)^2}{\ell(\tb)}\,d\tb.
\end{equation*}
\end{prop}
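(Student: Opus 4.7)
The plan is to parameterize $Cl(u)$ by one-variable functions $\varphi$ with $\varphi(0)=0$ and reduce the maximization \eqref{modTR} to a pointwise variational problem on $[0,\tmax]$. Writing $v=\varphi\circ u$, the chain rule gives $|\nabla v|^2=(\varphi'(u))^2|\nabla u|^2$ almost everywhere, and since $\varphi(0)=0$ the boundary condition $v|_{\partial K}=0$ is automatic. Both terms of the functional $\Tg[v]$ in \eqref{thefunctional} can then be written as single integrals in the level parameter.

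For the Dirichlet term, apply the coarea formula \eqref{coarea} with $f(x)=(\varphi'(u(x)))^2|\nabla u(x)|$, noting that $\varphi'(u)$ is constant on $\{u=t\}$:
\begin{equation*}
\int_K|\nabla v|^2\,\dg=\int_K |\nabla u|\,(\varphi'(u))^2|\nabla u|\,\dg=\int_0^{\tmax}(\varphi'(t))^2\,\ell(t)\,dt,
\end{equation*}
by the definition of $\ell(t)$ in \eqref{eqell}. For the linear term, write $\varphi(u(x))=\int_0^{u(x)}\varphi'(t)\,dt$ and apply Fubini:
\begin{equation*}
\int_K v\,\dg=\int_0^{\tmax}\varphi'(t)\,\gamma(\Kt)\,dt.
\end{equation*}
Combining,
\begin{equation*}
\Tg[v]=\int_0^{\tmax}\bigl(2\varphi'(t)\gamma(\Kt)-(\varphi'(t))^2\ell(t)\bigr)\,dt.
\end{equation*}

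The integrand is a concave quadratic in $\varphi'(t)$ which, for each fixed $t$, is maximized at $\varphi'(t)=\gamma(\Kt)/\ell(t)$ with maximum value $\gamma(\Kt)^2/\ell(t)$. Since the only constraint on $\varphi$ is $\varphi(0)=0$, this pointwise maximization is legitimate: define $\varphi^\star(s):=\int_0^s\gamma(K_\tau)/\ell(\tau)\,d\tau$ and the corresponding $v^\star=\varphi^\star\circ u$ lies in $Cl(u)$, yielding the desired formula.

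The main obstacles are technical. First, the coarea formula \eqref{coarea} was stated for $u\in C^{1}$, so one needs either a truncation/approximation argument or the general Sobolev coarea formula to handle $u\in W_0^{1,2}(K,\gamma)$; the nested-level-sets assumption is what makes the reduction to a one-variable problem meaningful, since it ensures $\gamma(\Kt)$ is the correct layer-cake weight. Second, one must check admissibility of $v^\star$: that $\gamma(\Kt)^2/\ell(t)$ is integrable on $[0,\tmax]$ and that $\varphi^\star\circ u\in\W$. This should follow from the fact that the optimized functional equals $\Tg[v^\star]\leq \Tg(K)<\infty$, combined with standard Sobolev chain-rule lemmas for monotone compositions.
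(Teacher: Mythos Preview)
Your proof is correct and follows essentially the same approach as the paper: both reduce $\Tg[\varphi\circ u]$ to a one-variable integral in $t$ via the coarea formula, then optimize pointwise in $\varphi'(t)$ (the paper phrases this as completing the square, but it is the same computation with the same optimizer $\varphi^\star$). The only cosmetic difference is that for the linear term you use Fubini/layer-cake directly, whereas the paper first computes $d\gamma(\Kt)/dt$ via coarea and then integrates by parts---these yield the same identity $\int_K \varphi(u)\,d\gamma=\int_0^{\tmax}\varphi'(t)\gamma(\Kt)\,dt$.
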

\begin{proof} Note that by our assumptions on a reference function, the integral $\int_0^{\tmax} 
\frac{\gamma(\Kt)^2}{\ell(\tb)}\,d\tb$ is bounded. Suppose $v=\varphi\circ u,$ for some function $\varphi:\R\rightarrow\R^+$ such that $\varphi(0)=0.$ Suppose without loss of generality that $\varphi\in C^2(\R)$ (otherwise we may approximate it by a smooth function.)

Consider the first summand from the functional $\Tg[v]$ defined in \eqref{thefunctional}:
\begin{equation}\label{above}
\int_K|\nabla v|^2\dg=\int_K|\nabla \varphi(u(x))|^2\dg=\int_K (\varphi')^2|\nabla u|^2\dg.
\end{equation}
By (\ref{above}) and by the co-area formula (\ref{coarea}),
\begin{equation}
\int_K|\nabla v|^2\dg
=\int_0^{\tmax}\left|\frac{d\varphi}{d\uv}\right|^2\int_{\Gt}|\nabla u|\,d\gamma_{\partial K_t}
\,d\uv
=\int_0^{\tmax}\left|\frac{d\varphi}{d\uv}\right|^2\ell({\tb})\,d\uv.\label{4-18}
\end{equation}
If $|\nabla u|>0$ almost everywhere, we could use the co-area formula (\ref{coarea}) to get
$$
\frac{d\gamma(\Kt)}{d\tb}=
\frac{d}{d\tb}\left(\int_{\tb}^\infty\int_{\partial K_s}\frac{1}{|\nabla \tt|}\, 
d\gamma_{\partial K_s} 
ds\right)=-\int_{\Gt}\frac{1}{|\nabla \tt|}\,d\gamma_{\partial K_t}.
\label{eqda}
$$
However, a reference function may not satisfy $|\nabla u|>0$ almost everywhere (also, it may exhibit a variety of bad behaviors, see e.g. Almgren, Leib \cite{AL}). Fortunately, in this case it was shown by Carlen, Kerce  \cite[page 16]{Carlen} that
\begin{equation}\label{star}
\frac{d\gamma(\Kt)}{d\tb}=\frac{d}{d\tb}\left(\int_{\Kt}\frac{1}{|\nabla \tt|}|\nabla \tt|\,\dg\right)\leq -\int_{\Gt}\frac{1}{|\nabla \tt|}\,d\gamma_{\partial K_t}.
\end{equation}
Using the same reasoning (that is, combining Lemma 4.2 from Carlen, Kerce \cite{Carlen} with the co-area formula), we also see, using the fact that $v\geq 0:$
\begin{equation}\label{v}
\int\limits_K v(x)\dg=\int\limits_K\varphi(u(x))\dg=\int\limits_{K\cap\{|\nabla u|\neq 0\}}\frac{\varphi(u(x))}{|\nabla u|}\,|\nabla u|\,\dg+
\int\limits_{K\cap\{|\nabla u|=0\}}\varphi(u(x))\dg
\end{equation}
\begin{equation*}
\leq\int_0^{\tmax}\varphi(\uv)
\int_{\Gt}\frac{1}{|\nabla u|}d\gamma_{\partial K_t} 
d\tb.
\end{equation*}
Combining (\ref{star}) with (\ref{v}), we see
\begin{align}
\int_K v(x)\dg\leq\int_0^{\tmax}\varphi(\uv)\left(-\frac{d\gamma(\Kt)}{d\tb}\right)d\tb= \int_0^{\tmax}\gamma(\Kt)\left(\frac{d\varphi}{dt}\right)
dt,
\label{4-23}
\end{align}
where the last equality follows by the one-dimensional integration by parts, together with the facts that $\gamma(K_{u_{max}})=0$ and $\varphi(0)=0$. 

Combining (\ref{4-18}) and (\ref{4-23}), we estimate the functional $\Tg[v]$ (defined in (\ref{thefunctional})) with $v=\varphi\circ u$ as follows:
\begin{equation*}
\Tg[\varphi(u)]\leq-\int_0^{\tmax} \left|\frac{d\varphi}{d\tb}\right|^2 \ell(\tb) \,d\tb
+2\int_0^{\tmax}\gamma(\Kt)\,\frac{d\varphi}{d\uv}\,d\uv.
\end{equation*}

Since $\ell(\tb)> 0$ for all $0\leq \tb\leq\tmax$ (by our assumptions on a reference function), we can rewrite the last equation as
\begin{align}
\Tg[v]&\leq\int_0^{\tmax}\ell(\tb) \left[2 \,\frac{d\varphi}{d\tb}\frac{\gamma(\Kt)}{\ell(t)}-\left|\frac{d\varphi}{d\tb}\right|^2 \right]d\tb\nn\\
&=\int_0^{\tmax}\ell(\tb) \left[-\left(\frac{d\varphi}{d\tb}-\frac{\gamma(\Kt)}{\ell(t)}\right)^2+\left(\frac{\gamma(\Kt)}{\ell(t)}\right)^2  \right]
d\tb\nn\\
&\leq\int_0^{\tmax}\frac{\gamma^2(\Kt)}{\ell(t)}\,
d\tb.\label{conclud}
\end{align}
Considering the non-negative function $\varphi_0(t)=\int_{0}^{\tb}\frac{\gamma(K_s)}{\ell(s)}\,ds,$ we have $d\varphi_0/d\uv=\gamma(\Kt)/\ell(t)$, and the equality is attained in the last inequality of \eqref{conclud}, while the first inequality turns into equality for a suitable choice of reference functions $u$ such that $\gamma(K\cap\{|\nabla u|=0\})=0$; therefore, 
\begin{equation*}
\Tgm(K;u)=\sup_{\varphi}\, \Tg[\varphi(u)] =\Tg[\varphi_0(u)]=\int_0^{\tmax}\frac{\gamma^2(\Kt)}{\ell(t)}
d\tb,
\end{equation*}
which completes the proof.
\end{proof}

As a corollary, we deduce, using the fact $u(x)-t$ is in the class of reference functions on $K_t$ when $u$ is a reference function on $K,$ and also the fact  that $\ell(t)$ as well as the level sets are the same for $u(x)-t$ and $u(x)$ on $[t, u_{\max}]$:

\begin{corollary}\label{cor4-5}
The functional $\Tgm(\Kt;u(x)-\uv)$ is  the Gaussian modified  torsional rigidity of a level set $\Kt$ with respect to a function $u(x)-\uv$, and it is given by
\begin{equation*}
\Tgm(\Kt;u(x)-\uv)=\int_{\uv}^{\tmax}\frac{\gamma^2(K_s)}{\ell(s)}ds. 
\end{equation*}
\end{corollary}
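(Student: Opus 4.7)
The plan is to simply apply Proposition \ref{FR-Lemma1} to the set $\Kt$ with reference function $w(x) := u(x) - \uv$, and then recognize the resulting integral as the stated expression after a change of variables.

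First I would verify that the hypotheses of Proposition \ref{FR-Lemma1} are satisfied for this choice. On $\Kt = \{u > \uv\}$, the function $w = u - \uv$ is non-negative and vanishes on the boundary $\partial \Kt = \{u = \uv\}$ (so $w \in W_0^{1,2}(\Kt,\gamma)$); its level sets are nested because those of $u$ are; and its maximum value on $\Kt$ equals $\tmax - \uv$. Applying Proposition \ref{FR-Lemma1} directly yields
\begin{equation*}
\Tgm(\Kt;w) = \int_0^{\tmax - \uv} \frac{\gamma(\{w > r\})^2}{\ell_w(r)}\, dr,
\end{equation*}
where $\ell_w(r) = \int_{\{w = r\}} |\nabla w|\, d\gamma_{\partial \{w = r\}}$.

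Next I would translate the quantities $\gamma(\{w > r\})$ and $\ell_w(r)$ back into the $u$-variables. Since $w = u - \uv$, we have $\nabla w = \nabla u$ and $\{w = r\} = \{u = \uv + r\} = \Gamma_{\uv + r}$, so
\begin{equation*}
\ell_w(r) = \int_{\Gamma_{\uv + r}} |\nabla u|\, d\gamma_{\partial K_{\uv + r}} = \ell(\uv + r).
\end{equation*}
Likewise, $\{w > r\} = \{u > \uv + r\} = K_{\uv + r}$, so $\gamma(\{w > r\}) = \gamma(K_{\uv + r})$.

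Performing the change of variables $s = \uv + r$ (so $ds = dr$, with $r = 0$ corresponding to $s = \uv$ and $r = \tmax - \uv$ corresponding to $s = \tmax$), I obtain
\begin{equation*}
\Tg(\Kt; u(x) - \uv) = \int_\uv^{\tmax} \frac{\gamma(K_s)^2}{\ell(s)}\, ds,
\end{equation*}
which is the claimed identity. No real obstacle is expected here, as the statement is a direct specialization of the previous proposition combined with a trivial translation of variables; the only care needed is in identifying $\ell_w(r)$ with $\ell(\uv + r)$, which follows from $\nabla w = \nabla u$ and the fact that level sets of $w$ are shifts of level sets of $u$.
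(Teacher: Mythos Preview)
Your proof is correct and is exactly the intended approach: the paper states this result as an immediate corollary of Proposition~\ref{FR-Lemma1} without giving a separate proof, and your argument---applying the proposition to $K_t$ with reference function $u-t$ and then changing variables $s=t+r$---is precisely the routine verification that makes this implication explicit.
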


\section{The Kohler-Jobin-style symmetrization in the Gauss space, and the proof of Theorem \ref{mainThm}}

Let $K$ be a domain in $\R^n$. Let $u\in W_0^{1,2}(K)$ be a non-negative function. 
In this section, we define the analogue of the Kohler-Jobin symmetrization in the Gauss space. The general idea is to correspond the level sets of $u$ to half-spaces whose torsional rigidity is the same as the modified torsional rigidity of the level sets of $u.$ Now, these half-spaces are level sets of rearrangement function $u^{\dagger}$  which will be defined a bit later, but composed with a specific function. Let us describe the details below.

In view of Corollary \ref{cor4-5}, define the distribution function of the modified torsional rigidity of $K$ with respect to $u$ by
\begin{equation}\label{D}
D(t)=\Tgm(\Kt;u(x)-\uv)=\int_{t}^{u_{\max}}\frac{\gamma^2(K_s)}{\ell(s)}ds.
\end{equation}
By Corollary \ref{cor4-5}, 
\begin{equation}\label{D'}
(D^{-1})'(\tau)=\frac{1}{D'(D^{-1}(\tau))}=-\frac {\ell(D^{-1}(\tau))}{\gamma^2(K_{D^{-1}(\tau)})}.
\end{equation}
Note that $D(t)$ is well-defined since $\ell(t)>0$ by our definition of the class of reference functions. Note also that $D$ is differentiable, since it is defined as an antiderivative.

Next, recall from the sub-section 2.3 the notation $H_s=\{x_1\geq s\}$ and 
$$T(s)=T_{\gamma}(H_s).$$ 
Let $K^{\dagger}$ be the half-space of the form $\{x_1\geq s\}$ such that 
$$t_0:=T^{mod}_{\gamma}(K; u)=T_{\gamma}(K^{\dagger}).$$ We shall define the \emph{Ehrhrard-Kohler-Jobin rearrangement} of $u$ to be the non-negative function $u^{\dagger}\in W_0^{1,2}(K^{\dagger})$, non-decreasing in $x_1$, and constant in $x_i$ for $i\neq 1,$ given by 
$$u^{\dagger}(x)=f\circ T(x_1),$$ 
where $f:\R\rightarrow\R$ is the (non-increasing) function given by
\begin{equation}\label{f-def}
\begin{cases}  f'(\tau) =\frac{(D^{-1})'(\tau)\gamma(K_{D^{-1}(\tau)})}{\gamma(H_{T^{-1}(\tau)})},\\ f(t_0)=0. \end{cases}\end{equation}

In other words, $u^{\dagger}=f(\tau)$ on the boundary of the right half-space whose torsional rigidity is $\tau$ 	(as the modified torisional rigidity of the corresponding level set), and so we have, for all $\tau\geq 0,$
\begin{equation}\label{tautol}
T_{\gamma}(K_{D^{-1}(\tau)}) \geq T^{mod}_{\gamma}(K_{D^{-1}(\tau)}; u-D^{-1}(\tau))= T_{\gamma}(H_{T^{-1}(\tau)}).
\end{equation}
Just to clarify, the last identity is a tautology $\tau=\tau$, and the first inequality follows from (\ref{TRgeqTmod}). In view of (\ref{tautol}) together with Lemma \ref{SV} (the Gaussian analogue of Saint-Venant theorem), we conclude that
\begin{equation}\label{levset-meas}
\gamma(K_{D^{-1}(\tau)})\geq \gamma(H_{T^{-1}(\tau)}).	
\end{equation}
Note also that the condition $f(t_0)=0$ means that $u^{\dagger}=0$ on the boundary of $K^{\dagger}$, just as intended.

We are ready to prove

\begin{theorem}\label{mainthm-sym}
The function $u^{\dagger}$ has the following properties:
\begin{enumerate}
\item $\int_K |\nabla u|^2 \,d\gamma=\int_{K^{\dagger}} |\nabla u^{\dagger}|^2 \,d\gamma;$
\item $\int_K u\, d\gamma=\int_{K^{\dagger}}  u^{\dagger} \,d\gamma;$
\item For any convex non-decreasing function $F:\R^+\rightarrow\R^+$ such that $F(0)=0,$ we have $$\int_K F(u) \,d\gamma\leq \int_{K^{\dagger}}  F(u^{\dagger})\, d\gamma.$$
\end{enumerate}
\end{theorem}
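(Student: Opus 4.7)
The plan is to express each of the three quantities as an explicit one-dimensional integral over a common parameter $\tau \in [0,t_{0}]$, where $t_{0} := \Tgm(K;u) = \Tg(K^{\dagger})$, and then to verify the two equalities directly and reduce the inequality to a pointwise comparison of two monotone functions. The key ingredient from the ambient theory, already captured in \eqref{levset-meas}, is the pointwise bound $g(\tau) \geq h(\tau)$, where $g(\tau) := \gamma(K_{D^{-1}(\tau)})$ and $h(\tau) := \gamma(H_{T^{-1}(\tau)})$---a consequence of the Gaussian Saint-Venant inequality (Lemma~\ref{SV}). Because $u^{\dagger}$ depends only on $x_{1}$, Fubini reduces each integral over $K^{\dagger}$ to a one-dimensional integral in $x_{1}$; the change of variable $\tau = T(x_{1})$, combined with Lemma~\ref{torhs} (which gives $T'(x_{1})\, e^{-x_{1}^{2}/2}/\sqrt{2\pi} = -h(\tau)^{2}$), then converts it into an integral in $\tau$.

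For (1), I will apply the coarea formula \eqref{coarea} with $f = |\nabla u|$ to get $\int_{K}|\nabla u|^{2}\,\dg = \int_{0}^{\tmax}\ell(t)\,dt$, and then change variables via $\tau = D(t)$ (whose Jacobian $(D^{-1})'(\tau) = -\ell(D^{-1}(\tau))/g(\tau)^{2}$ is \eqref{D'}); this produces $\int_{0}^{t_{0}}\ell(D^{-1}(\tau))^{2}/g(\tau)^{2}\,d\tau$. The parallel computation for $u^{\dagger}$, via Fubini, the substitution $\tau = T(x_{1})$, and the defining relation \eqref{f-def} for $f'$, yields exactly the same integral. For (2), applying \eqref{4-23} with $\varphi(t) = t$ gives $\int_{K} u\,\dg = \int_{0}^{\tmax}\gamma(K_{t})\,dt$, which the same change of variable reduces to $\int_{0}^{t_{0}}\ell(D^{-1}(\tau))/g(\tau)\,d\tau$; the analogous computation on the $K^{\dagger}$-side, using $v = f(\tau)$, gives the same expression.

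For (3), I assume $F(0) = 0$, which covers the case of interest $F(x) = x^{2}$ relevant to the Rayleigh quotient (note that the inequality would in fact fail for $F(0) > 0$, since $\gamma(K) \geq \gamma(K^{\dagger})$ by Saint-Venant applied to the half-space realizing $t_{0}$). Layer cake combined with the above changes of variables then gives
\begin{align*}
\int_{K} F(u)\,\dg &= \int_{0}^{t_{0}} F'(D^{-1}(\tau))\,\frac{\ell(D^{-1}(\tau))}{g(\tau)}\,d\tau,\\
\int_{K^{\dagger}} F(u^{\dagger})\,\dg &= \int_{0}^{t_{0}} F'(f(\tau))\,\frac{\ell(D^{-1}(\tau))}{g(\tau)}\,d\tau.
\end{align*}
Since $F'$ is non-decreasing (by convexity of $F$), the desired inequality reduces to the pointwise comparison $f(\tau) \geq D^{-1}(\tau)$ on $[0,t_{0}]$.

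This pointwise comparison is the main obstacle and the place where the Saint-Venant information actually enters. Both $f$ and $D^{-1}$ vanish at $\tau = t_{0}$, and differentiating \eqref{f-def} shows that $f'(\tau) = (D^{-1})'(\tau)\cdot g(\tau)/h(\tau)$. Because $(D^{-1})' \leq 0$ and $g/h \geq 1$, we have $f'(\tau) \leq (D^{-1})'(\tau) \leq 0$, so $f$ decreases away from $t_{0}$ at least as fast as $D^{-1}$ does; integrating from $t_{0}$ downward produces $f(\tau) \geq D^{-1}(\tau)$ for all $\tau \in [0,t_{0}]$, which finishes the argument.
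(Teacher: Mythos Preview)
Your argument is correct and follows essentially the same route as the paper's proof: coarea plus the change of variables $\tau=D(t)$ on the $K$-side, Fubini plus $\tau=T(x_1)$ and Lemma~\ref{torhs} on the $K^{\dagger}$-side, and the pointwise comparison $f(\tau)\geq D^{-1}(\tau)$ obtained by integrating $f'(\tau)\leq (D^{-1})'(\tau)$ back from $t_0$. Your explicit remark that Part~(3) requires $F(0)=0$ is a valid and useful observation---the paper's layer-cake computation tacitly assumes this as well, and as you note the inequality can reverse for $F(0)>0$ since $\gamma(K)\geq\gamma(K^{\dagger})$.
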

\begin{proof} 

\textbf{Part (1).} By the co-area formula (\ref{coarea}), combined with the definition \eqref{eqell} of the function $\ell(s),$ we have
\begin{equation}\label{step1}
\int_K |\nabla u|^2 d\gamma=\int_0^{u_{max}} \ell(s)\,ds.
\end{equation}
Applying change of variables $\tau=D(s),$ with the function $D$ defined in (\ref{D}), we write
\begin{equation}\label{step2}
\int\limits_0^{u_{max}} \ell(s)\, ds= -\int\limits_0^{t_0} (D^{-1})'(\tau)\cdot \ell\circ D^{-1}(\tau) d\tau= \int\limits_0^{t_0} \left((D^{-1})'(\tau)\right)^2\gamma(K_{D^{-1}(\tau)})^2 d\tau,
\end{equation}
where in the last passage we used (\ref{D'}). Combining (\ref{step1}) and (\ref{step2}), we get

\begin{equation}\label{step3}
\int_K |\nabla u|^2 d\gamma=\int_0^{t_0} \left((D^{-1})'(\tau)\right)^2\gamma(K_{D^{-1}(\tau)})^2 d\tau.
\end{equation}

\medskip

Next, for any $\tau\in\R,$ in view of the definition of $u^{\dagger},$ we have
\begin{equation}\label{previous}
	\int\limits_{\{x_1=T^{-1}(\tau)\}} |\nabla u^{\dagger}| d\gamma_{\partial H_{T^{-1}(\tau)}}=\frac{1}{\sqrt{2\pi}} \,e^{-\frac{T^{-1}(\tau)^2}{2}} f'(\tau)T'(T^{-1}(\tau))=-f'(\tau)\gamma(H_{T^{-1}(\tau)})^2,
	\end{equation}
where in the last passage we used the fact that, by Lemma \ref{torhs}, $T'(s)=-\sqrt{2\pi}e^{\frac{s^2}{2}} \gamma(H_s)^2$.

Thus we write, by the change of variables $s=f(\tau),$ and again by the co-area formula:
\begin{equation}\label{finalstep}
\int_{K^\dagger} |\nabla u^{\dagger}|^2 d\gamma= \int_{0}^{u_{\max}^\dagger} \int_{\{u^{\dagger}=s\}} |\nabla u^{\dagger}| d\gamma_{\partial\{u^{\dagger}=s\}} ds=
\end{equation}
$$- \int_{0}^{t_0} f'(\tau)\int_{\{u^{\dagger}=f(\tau)\}} |\nabla u^{\dagger}| d\gamma_{\partial\{u^{\dagger}=f(\tau)\}} d\tau= \int_{0}^{t_0} f'(\tau)^2 \gamma(H_{T^{-1}(\tau)})^2  d\tau,$$
where in the last passage we used (\ref{previous}) together with the fact that $u^{\dagger}(x)=f(\tau)$ whenever $x_1=T^{-1}(\tau).$ Finally, by (\ref{step3}), (\ref{finalstep}) and (\ref{f-def}), we conclude part (1).

\medskip
\medskip

\textbf{Part (2).} We write the layer-cake representation and apply the change of variables $\tau=D(t)$ and $\tau=f^{-1}(t)$:
$$\int_K u \,d\gamma=\int_0^{u_{max}} \gamma(\{u\geq t\}) \,dt=-\int_0^{t_0} \gamma(K_{D^{-1}(\tau)}) (D^{-1})'(\tau) \,d\tau=$$$$ -\int_0^{t_0} \gamma(H_{T^{-1}(\tau)}) f'(\tau) \,d\tau= \int_{K^{\dagger}} u^{\dagger} \,d\gamma,$$
where in the last passage we used the definition of $f$ and $u^{\dagger}$ (\ref{f-def}).

\medskip
\medskip

\textbf{Part (3).} Similarly to part (2), we use the layer cake formula together with the additional change of variables $s=F^{-1}(t)$ and write 

$$\int\limits_K F(u) \,d\gamma=\int\limits_{F(0)}^{F(u_{max})} \gamma(\{F(u)\geq t\}) \,dt=-\int\limits_0^{t_0} F'(D^{-1}(\tau)) \gamma(K_{D^{-1}(\tau)}) (D^{-1})'(\tau) \,d\tau$$
and 
$$\int\limits_{K^{\dagger}} F(u^{\dagger})\, d\gamma=\int\limits_{F(0)}^{F(u^\dagger_{max})} \gamma(\{F(u^{\dagger})\geq t\}) dt=-\int\limits_0^{t_0} F'(f(\tau)) \gamma(K_{T^{-1}(\tau)}) f'(\tau) d\tau.$$
In view of the fact that $F$ is convex, and therefore $F'$ is non-decreasing, and in view of (\ref{f-def}), in order to conclude part (3), we are only left to show that for all $\tau\in [0,t_0],$
\begin{equation}\label{left}
	D^{-1}(\tau)\leq f(\tau).
\end{equation}
Note that by (\ref{f-def}) and (\ref{levset-meas}), in view of the fact that both $D$ and $f$ are non-increasing, we have 
 \begin{equation*}
	-(D^{-1})'(\tau)\leq -f'(\tau).
\end{equation*}
We conclude that
$$D^{-1}(\tau)=-\int_{\tau}^{t_0} (D^{-1})'(\tau) d\tau \leq -\int_{\tau}^{t_0} f'(\tau) d\tau = f(\tau),$$
and (\ref{left}) follows.
\end{proof}

\begin{remark} Note that there is not much room in this construction, and one really has to be very precise when constructing the symmetrization. Indeed, if, say, $\int_K |\nabla u|^2 d\gamma>\int_{K^{\dagger}} |\nabla u^{\dagger}|^2 d\gamma$ (rather than equals), but $\int_K u d\gamma=\int_{K^{\dagger}} u^{\dagger} d\gamma,$ then $T_{\gamma}^{mod}(K;u)< T_{\gamma}(K^{\dagger})$, in view of the definition of the modified torsional rigidity (\ref{modTR}) as well as Proposition \ref{prop1-3}.
\end{remark}

As a corollary, using $F(t)=t^2,$ we immediately get

\begin{corollary}\label{reilly} For any non-negative $u\in W_0^{1,2}(K),$ the Rayleigh quotient does not increase under the $\dagger-$symmetrization. In other words,
$$\frac{\int_K |\nabla u|^2 d\gamma}{\int_K u^2 d\gamma}\geq \frac{\int_{K^{\dagger}} |\nabla u^{\dagger}|^2 d\gamma}{\int_{K^{\dagger}} (u^{\dagger})^2 d\gamma}.$$
\end{corollary}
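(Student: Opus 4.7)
The plan is to apply Theorem~\ref{mainthm-sym} directly, with the specific choice $F(t)=t^2$; the corollary is essentially a one-line repackaging of parts (1) and (3) of that theorem.

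First I would observe that, for the purposes of the Rayleigh quotient, one may reduce to the case where $u$ is non-negative (replacing $u$ by $|u|$ changes neither numerator nor denominator), which is precisely the setting in which the $\dagger$-symmetrization has been defined (for a non-negative reference function with nested level sets). Part (1) of Theorem~\ref{mainthm-sym} then gives $\int_K |\nabla u|^2 \,d\gamma = \int_{K^\dagger} |\nabla u^\dagger|^2 \,d\gamma$, so the numerators of the two Rayleigh quotients coincide, and the task reduces to comparing the denominators.

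For the denominators, I would note that $F(t)=t^2$ is smooth, convex, and non-decreasing on $\R^+$, and that both $u$ and $u^\dagger$ are non-negative; thus part (3) of Theorem~\ref{mainthm-sym} applies and yields
\begin{equation*}
\int_K u^2 \,d\gamma \leq \int_{K^\dagger} (u^\dagger)^2 \,d\gamma.
\end{equation*}
Since $a/b$ is decreasing in $b>0$ for fixed $a>0$, combining this with the equality of the numerators gives the stated inequality.

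Honestly, there is no real obstacle at the level of the corollary itself beyond verifying that $t^2$ is an admissible choice of $F$; the serious work has already been done in the proof of Theorem~\ref{mainthm-sym}, whose part~(3) hinges on the level-set comparison \eqref{levset-meas}, which in turn rests on the Gaussian Saint-Venant theorem (Lemma~\ref{SV}) and on the specific properties of the half-space torsional rigidity $T(s)$ computed in Lemma~\ref{torhs}.
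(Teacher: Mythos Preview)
Your proposal is correct and follows exactly the paper's approach: the corollary is stated as an immediate consequence of Theorem~\ref{mainthm-sym} with the choice $F(t)=t^2$, using part~(1) for equality of the numerators and part~(3) for the denominator inequality. Your additional remarks (reduction to $u\geq 0$, and tracing the dependence on Lemmas~\ref{SV} and~\ref{torhs}) are accurate elaborations but not needed for the proof itself.
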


This immediately implies

\begin{corollary}\label{Lambda-decr} For any  
domain $K$, taking $K^{\dagger}$ to be the $\dagger$-rearrangement of $K$ with respect to the first Dirichlet eigenfunction of $L$, we have $\Lambda_{\gamma}(K)\geq \Lambda_{\gamma}(K^{\dagger})$.
\end{corollary}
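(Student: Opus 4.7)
The plan is to apply Corollary \ref{reilly} with $u$ equal to the first Dirichlet eigenfunction of $L$ on $K$, after which the desired inequality drops out of the variational characterization of $\Lambda_\gamma$.

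First I would invoke the standard spectral theory for the self-adjoint operator $L$ on $K$ (as in Bogachev \cite{bogachev}) to obtain a non-negative first eigenfunction $u\in W_0^{1,2}(K,\gamma)$ satisfying $Lu=-\Lambda_\gamma(K)u$. Since $u\geq 0$ and $\Lambda_\gamma(K)>0$, we have $Lu\leq 0$, and the maximum principle---precisely as in the ``furthermore'' portion of Theorem \ref{exist-dir}---guarantees that the super-level sets $\{u>t\}$ are nested. Hence $u$ is admissible as a reference function for the Ehrhard-Kohler-Jobin rearrangement, producing a half-space $K^\dagger$ and a function $u^\dagger\in W_0^{1,2}(K^\dagger,\gamma)$ to which Theorem \ref{mainthm-sym} and Corollary \ref{reilly} apply.

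With this choice, Proposition \ref{Prop3-2} gives
\begin{equation*}
\Lambda_\gamma(K)=\frac{\int_K|\nabla u|^2\,d\gamma}{\int_K u^2\,d\gamma},
\end{equation*}
while Corollary \ref{reilly} yields
\begin{equation*}
\frac{\int_K|\nabla u|^2\,d\gamma}{\int_K u^2\,d\gamma}\geq \frac{\int_{K^\dagger}|\nabla u^\dagger|^2\,d\gamma}{\int_{K^\dagger}(u^\dagger)^2\,d\gamma}.
\end{equation*}
Since $u^\dagger\in W_0^{1,2}(K^\dagger,\gamma)$ is a legitimate competitor in the variational definition of $\Lambda_\gamma(K^\dagger)$, we also have
\begin{equation*}
\Lambda_\gamma(K^\dagger)\leq \frac{\int_{K^\dagger}|\nabla u^\dagger|^2\,d\gamma}{\int_{K^\dagger}(u^\dagger)^2\,d\gamma}.
\end{equation*}
Chaining these three relations delivers $\Lambda_\gamma(K)\geq \Lambda_\gamma(K^\dagger)$, as claimed.

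The only genuine subtlety is verifying that the first Dirichlet eigenfunction can be taken non-negative with nested super-level sets, since the Ehrhard-Kohler-Jobin construction is only defined for such reference functions. Non-negativity is standard (the first eigenfunction of a self-adjoint elliptic operator does not change sign), and the nestedness then follows from essentially the same maximum-principle argument already invoked in Theorem \ref{exist-dir}, applied now to the eigenvalue equation whose right-hand side $-\Lambda_\gamma(K)u$ is non-positive.
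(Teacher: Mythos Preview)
Your proof is correct and follows essentially the same route as the paper: apply Corollary \ref{reilly} to the first Dirichlet eigenfunction $u$, use that $\Lambda_\gamma(K)$ equals its Rayleigh quotient, and then use the variational definition on $K^\dagger$. You supply more detail than the paper does in checking that $u$ is an admissible reference function (non-negativity and nested level sets), but the argument is the same.
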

\begin{proof} We use the Corollary \ref{reilly} with the function $u$ which minimizes the Rayleigh quotient (i.e. with the first Dirichlet eigenfunction of $L$). Note that it is indeed non-negative: considering $|u|$ instead of $u$ decreases the Rayleigh quotient. We see that
$$\Lambda_{\gamma}(K)=\frac{\int_K |\nabla u|^2 d\gamma}{\int_K u^2 d\gamma}\geq \frac{\int_{K^{\dagger}} |\nabla u^{\dagger}|^2 d\gamma}{\int_{K^{\dagger}} (u^{\dagger})^2 d\gamma}\geq \Lambda_{\gamma}(K^{\dagger}).$$
\end{proof}

Finally, we are ready to outline the

\textbf{Proof of Theorem \ref{mainThm}.} By the definition of the $\dagger-$rearrangement and the modified torsional rigidity, $T_{\gamma}(K)\geq T_{\gamma}(K;u)=T_{\gamma}(K^{\dagger}),$ where $u$ is taken to be the first Dirichlet eigenfunction of $L$ on $K.$ Next, by Corollary \ref{Lambda-decr}, $\Lambda_{\gamma}(K)\geq \Lambda_{\gamma}(K^{\dagger}).$ 

Therefore, we have shown that for any open domain $K$ there exists a half-space $K^{\dagger}$ such that simultaneously, $\Lambda_{\gamma}(K)\geq \Lambda_{\gamma}(K^{\dagger})$ and $T_{\gamma}(K)\geq T_{\gamma}(K^{\dagger})$. 

Consider $H$ to be the half-space such that $T_{\gamma}(K)= T_{\gamma}(H)$. Then, by Lemma \ref{T-mon}, $K^{\dagger}\subset H$, and thus by Lemma \ref{L-mon}, $\Lambda_{\gamma}(K^{\dagger})\geq \Lambda_{\gamma}(H)$. Hence $\Lambda_{\gamma}(K)\geq \Lambda_{\gamma}(H)$. This finishes the proof. $\square$

\begin{remark} Note that a more general result also follows from Theorem \ref{mainthm-sym}: given a convex function $F:\R^+\rightarrow\R,$ define $\Lambda_{\gamma}^F(K)$ to be the minimal value of the following energy:
$$\Lambda_{\gamma}^F(K)=\inf_{u\in W_0^{1,2}(K)}\frac{\int_K |\nabla u|^2d\gamma}{\int_K F(u) d\gamma}.$$
For a domain $K$, let $\tilde{K}$ be the half-space with the same Gaussian torsional rigidity as $K$. Then $\Lambda^F_{\gamma}(K)\geq \Lambda^F_{\gamma}(\tilde{K}).$

\end{remark}

\end{document}